\newcommand{\nc}{\newcommand}
\nc{\browntext}[1]{\textcolor{brown}{#1}}
\nc{\greentext}[1]{\textcolor{green}{#1}}
\nc{\redtext}[1]{\textcolor{red}{#1}}
\nc{\bluetext}[1]{\textcolor{blue}{#1}}
\nc{\brown}[1]{\browntext{ #1}}
\nc{\green}[1]{\greentext{ #1}}
\nc{\red}[1]{\redtext{ #1}}
\nc{\blue}[1]{\bluetext{ #1}}
\newtheorem{thm}{Theorem}  [section]
\newtheorem{prop}[thm]{Proposition}
\newtheorem{example}[thm]{Example}
\theoremstyle{remark}
\newtheorem{rem}[thm]{Remark}
\numberwithin{equation}{section}
\newcommand{\diag}{\text{diag}}
\newcommand{\mbf}{\mathbf}
\newcommand{\ev}{\bar{0}}
\nc{\fZ}{{\mathfrak Z}}
\newcommand{\z}{\CMcal Z}
\newcommand{\rT}{r_i (T_{\wb} E_i)}
\newcommand{\odd}{\bar{1}}
\newcommand{\ov}{\overline}
\newcommand{\qbinom}[2]{\begin{bmatrix} #1\\#2 \end{bmatrix} }
\newcommand{\Q}{\mathbb Q}
\newcommand{\U}{\mbf U}
\newcommand{\Ui}{{\mbf U}^\imath}
\newcommand{\vs}{\varsigma}
\newcommand{\Z}{\mathbb Z}
\newcommand{\B}{\mbf B}
\newcommand{\tK}{\widetilde{K}}
\def \I{\mathbb{I}}
\newcommand{\tUi}{\widetilde{{\mathbf U}}^\imath}
\newcommand{\tU}{\widetilde{\mathbf U}}
\def \btau{{{\tau}}}
\newcommand{\tk}{\widetilde{k}}
\newcommand{\Ib}{\I_{\bullet}}
\newcommand{\Iw}{\I_{\circ}}
\newcommand{\wb}{w_\bullet}
\nc{\Bin}{B_i^{(n)}}
\nc{\Binn}{B_i^{(2)}}
\nc{\Binnn}{B_i^{(3)}}
\def \tf{\widetilde{y}}
\def \bvs{{\boldsymbol{\varsigma}}}
\begin{document}

	\title[Serre-Lusztig relations for  $\imath$quantum groups II]{Serre-Lusztig relations for $\imath$quantum groups II}
	
	\author[Xinhong Chen]{Xinhong Chen}
	\address{Department of Mathematics, Southwest Jiaotong University, Chengdu 610031, P.R.China}
	\email{chenxinhong@swjtu.edu.cn}

	\author[Gail Letzter]{Gail Letzter}
	\address{Mathematics Research Group, National Security Agency, Fort Meade,
Maryland 20755-6844, 
and Department of Mathematics, University of Maryland, College Park, Maryland 20742}
	\email{gletzter@verizon.net}
	
	\author[Ming Lu]{Ming Lu}
	\address{Department of Mathematics, Sichuan University, Chengdu 610064, P.R.China}
	\email{luming@scu.edu.cn}

	\author[Weiqiang Wang]{Weiqiang Wang}
	\address{Department of Mathematics, University of Virginia, Charlottesville, VA 22904}
	\email{ww9c@virginia.edu}

	\subjclass[2010]{Primary 17B37, 17B67.}
	
	\keywords{$\imath${}quantum groups, quantum symmetric pairs, Serre-Lusztig relations}
	
\begin{abstract}
		The $\imath$Serre relations and the corresponding Serre-Lusztig relations are formulated and established for arbitrary $\imath$quantum groups arising from quantum symmetric pairs of Kac-Moody type. 
\end{abstract}
	
	\maketitle

	
	\section{Introduction}

	\subsection{}
	
In this Letter, we generalize some main results concerning $\imath$Serre relations in \cite{CLW18} and the corresponding Serre-Lusztig (i.e., higher order Serre) relations in \cite{CLW21} among Chevalley generators $B_i$ and $B_j$, for $\tau i =i = \wb i$ and $i \neq j \in \Iw$, in $\imath$quantum groups $\Ui$ or $\tUi$ arising from quantum symmetric pairs of {\em arbitrary} Kac-Moody type, to the general cases for $\tau i=i$ (with condition $\wb i=i$ dropped).  The notations are to be explained below, and we refer to {\em loc. cit.} for a more complete introduction of backgrounds.

\subsection{}
	
	We are concerned about the Serre type relations among the generators $B_i$ in an $\imath$quantum group $\Ui$ (or a universal $\imath$quantum group $\tUi$) arising from quantum symmetric pairs (QSP) $(\U, \Ui)$. Recall that the definition of QSP is built on the Satake diagrams or admissible pairs $(\I=\Iw\cup \Ib, \tau)$ \cite{Le99, Le03, Ko14}. A universal $\imath$quantum group $\tUi$ \cite{LW19a} is a subalgebra of a Drinfeld double quantum group $\tU$, and $\Ui$ with parameters are recovered from $\tUi$ by central reductions. 
	
	The Serre type relations of $\Ui$ were obtained by G.~Letzter \cite{Le03} in finite type. The Serre relations between $B_i, B_j$ (where $B_j$ appears in degree 1) were explicitly known \cite{Ko14, BK15} in an arbitrary $\imath$quantum group $\Ui$ (or $\tUi$) of Kac-Moody type, unless $\tau i=i \in \Iw$; in case $\tau i =i \in \Iw$, explicit Serre relations were written down under a strong constraint on the Cartan integers $|a_{ij}| \le 4$, cf. \cite{BB10, BK15}. General $\imath$Serre relations for $\tau i =i = \wb i \in \Iw$ and $j \in \Iw$ without any constraint on Cartan integers $a_{ij}$ have been formulated by the authors \cite{CLW18};  see \eqref{eq:S2} below.  ``Explicit" yet unwieldy formulas for Serre relations in an arbitrary $\imath$quantum group $\Ui$ are also obtained in \cite{DeC19}; the coefficients involved therein can be rather difficult to compute in practice. More recently, a compact presentation for arbitrary $\imath$quantum groups has been obtained in \cite{KY21} in terms of continuous $q$-Hermite polynomials and a new family of deformed Chebyshev polynomials. 
	
	The Serre-Lusztig (or higher order Serre) relations for $\tUi$ hold in closed forms \cite{CLW21}
	\begin{align} \label{eq:f}
		\tf_{i,j;n,m,\ov{p},\ov{t},e}=0,
	\end{align}
	for $\tau i =i = \wb i \in \Iw$, $i \neq j \in \Iw$ and $m\ge 1-na_{ij}$; see \eqref{eq:m-aodd}--\eqref{eq:m-aeven} for the definition of $\tf$. These are generalizations of higher order Serre relations for quantum groups in \cite{Lus94}. In case $n=1$ and $m=1-a_{ij}$, the above relation reduces to the $\imath$Serre relation  \cite{CLW18}
	\begin{align}
		\sum_{r=0}^{1-a_{ij}} (-1)^r  B_{i,\overline{a_{ij}}+\overline{p_i}}^{(r)}B_j B_{i,\overline{p}_i}^{(1-a_{ij}-r)} &=0,
		\label{eq:S2}
	\end{align}
	which holds for $\tau i =i = \wb i \in \Iw$, $i \neq j \in \Iw$ in an arbitrary $\imath$quantum group; more generally, the Serre-Lusztig relations of minimal degree (i.e., \eqref{eq:f} for $m=1-na_{ij}$ and $n\ge 1$) take a similar simple form as in \eqref{eq:S2}. These relations are expressed in terms of $\imath$divided powers $B_{i,\ov{p}}^{(m)}$ (cf. \cite{BW18a, BeW18, CLW21}), depending on a parity $\ov{p} \in \{\ev, \odd\}$.
	
	In particular, these relations hold for $i=\tau i$ in an arbitrary quasi-split $\imath$quantum group (i.e., when $\Ib =\emptyset$).
	Conjectures and examples for Serre-Lusztig relations of minimal degrees in $\imath$quantum groups of split affine ADE type in very different forms  were proposed earlier by Baseilhac and Vu \cite{BaV14, BaV15}; their conjecture was proved for $q$-Onsager algebra in \cite{Ter18}.

\subsection{}
	
	In this Letter, the $\imath$Serre relation for an $\imath$quantum group of arbitrary  Kac-Moody type will be formulated as 
\begin{align}
		\sum_{r=0}^{1-a_{ij}} (-1)^r  \B_{i,\overline{a_{ij}}+\overline{p_i}}^{(r)}B_j \B_{i,\overline{p}_i}^{(1-a_{ij}-r)} &=0.
		\label{eq:S2b}
\end{align}
That is, it formally takes the same form as \eqref{eq:S2}, where we have replaced $B_{i,\ov{p}}^{(m)}$ in \eqref{eq:S2} by a more general definition of $\imath$-divided powers $\B_{i,\ov{p}}^{(m)}$ defined in \eqref{eq:iDPodd}--\eqref{eq:iDPev} (that is, $\vs_i$ in $B_{i,\ov{p}}^{(m)}$ is replaced by $\vs_i \, \rT$); in the case when $\wb i=i$, $\B_{i,\ov{p}}^{(m)}$ is reduced to the original $B_{i,\ov{p}}^{(m)}$ thanks to $r_i(E_i)=1$. Moreover, the Serre-Lusztig relations \eqref{eq:f} are generalized accordingly to arbitrary $\imath$quantum group $\tUi$ (see Theorem~ \ref{thm:f=0}) and they follow by a recursive relation similar to the one in \cite{CLW21} (see Theorem~\ref{thm:recursion}).
	
	For $i\in \Iw$ with $\tau i =i \neq \wb i$, a version of $\imath$divided powers $B_{i}^{(m)}$ (independent of $\ov{p} \in \Z_2$) was introduced in \cite{BW21} as a key ingredient toward $\imath$canonical basis. These $B_{i}^{(m)}$ (for some suitable parameter $\vs_i$) therein satisfies a crucial integral property, i.e., it lies in the $\Z[q, q^{-1}]$-form of the modified $\imath$quantum group. In contrast, the $\B_{i,\ov{p}}^{(m)}$ introduced in this Letter are not integral for $m\ge 2$ and for any parameter $\vs_i$. Paraphrasing, the $\imath$divided powers for $\tau i=i \in \Iw$, arising in 2 totally different settings of $\imath$canonical basis and $\imath$Serre relations, miraculously coincide if {\em and only if} $\wb i=i$. (Alas, we had a mental block on the ``only if" part, and this explains why the formulation of this Letter were not noticed earlier when we were writing \cite{CLW21}.)

\subsection{}
	We shall establish in this Letter the $\imath$Serre relation \eqref{eq:S2b} and its corresponding Serre-Lusztig relations. Actually, we achieve much more, by formulating and establishing further generalizations of these relations, which involve higher powers $B_j^n$, for $n\ge 2$; see Theorem~\ref{thm:min}:
\begin{align*}
	\sum_{r+s=1-na_{ij}} (-1)^r \B^{(r)}_{i,\ov{p}} B_j^n \B_{i,\ov{p}+\ov{na_{ij}}}^{(s)} =0.
\end{align*}
This is referred to as Serre-Lusztig relations of minimal degrees. 
For more general Serre-Lusztig relations, see Theorem~ \ref{thm:f=0}. 
	
	The proof of the Serre-Lusztig relations in Theorem~\ref{thm:min} and Theorem~ \ref{thm:f=0} uses 2 key ingredients. First, it relies on the results in \cite{CLW21} in an essential way, and a reader is recommended to keep a copy of it at hand (as it is impractical for us to repeat verbatim most arguments of that paper in the current general setting). The other key ingredient is a new universality property for $\imath$quantum groups formulated in Proposition~\ref{prop:universalitystrong}. This universality statement follows by a (seemingly weaker) version of universality property proved in Proposition \ref{prop:universality} and then comparing with known Serre-Lusztig relations for quasi-split $\imath$quantum groups in \cite{CLW21}. (Such a universality result for $n=1$ was almost explicit in \cite{Le03, Ko14} based on their projection techniques, and was made very explicit in \cite{DeC19}.)  
	
This Letter is organized as follows. In Section~\ref{sec:iDP}, we set up the preliminaries on Drinfeld doubles and quantum symmetric pairs. Then we formulate the new $\imath$divided powers in $\imath$quantum groups.
In Section~\ref{sec:univ}, we establish a universality property in $\imath$quantum groups concerning the structure constants appearing in Serre-Lusztig relations. 
In Section~\ref{sec:SL}, we formulate and establish Serre-Lusztig relations in $\imath$quantum groups in a great generality. 

	\vspace{2mm}
\noindent {\bf Acknowledgement.}
	XC is supported by the Fundamental Research Funds for the Central Universities grant 2682020ZT100 and 2682021ZTPY043. ML is partially supported by the National Natural Science Foundation of China (No. 12171333). 
	WW is partially supported by the NSF grant DMS-2001351. We thank some anonymous experts for helpful feedbacks and suggestions. 
	
\section{$\imath$Divided powers in $\imath$quantum groups}
 \label{sec:iDP}
\subsection{New $\imath$divided powers for $\tUi$}
	
	Given a Cartan datum  $(\I,\cdot)$, we have a \emph{root datum} of type $(\I,\cdot)$ \cite[1.1.1, 2.2.1]{Lus94}, which consists of
\begin{itemize}
	\item[(a)] 
	two finitely generated free abelian groups $Y,X$ and a perfect bilinear pairing $\langle\cdot,\cdot\rangle:Y\times X\rightarrow\Z$;
	\item[(b)] 
	an embedding $\I\subset X$ ($i\mapsto \alpha_i$) and an embedding $\I\subset Y$ ($i\mapsto h_i$) such that $\langle h_i,\alpha_j\rangle =2\frac{i\cdot j}{i\cdot i}$ for all $i,j\in \I$.
\end{itemize}
	
	The matrix $C= (a_{ij})_{i,j\in \I} =(\langle h_i, \alpha_j\rangle)_{i,j\in \I}$ is a \emph{generalized Cartan matrix}. For $D=\diag(\epsilon_i\mid \epsilon_i \in \Z^+,\; i\in \I)$, where $\epsilon_i=\frac{i\cdot i}{2}$, $DC$ is symmetric. Let $q_i=q^{\epsilon_i}$ for any $i\in\I$. The associated Drinfeld-Jimbo quantum group $\U=\U_\I$ is a $\Q(q)$-algebra generated by $E_i, F_i, K_i^{\pm 1}$, for $i\in \I$. Let $W$ denote the Weyl group generated by simple reflections $s_i$ for $i\in\I$.
	
	The Drinfeld double $\tU =\tU_\I$ is a $\Q(q)$-algebra generated by $E_i, F_i, \tK_i, \tK_i'$, for $i\in \I$, and $\tK_i \tK_i'$ is central in $\tU$; cf., for example, \cite[\S6]{LW19a}. Then $\U$ is obtained from $\tU$ by a central reduction: 
	\[
	\U =\tU / (\tK_i \tK_i' -1\mid i\in\I).
	\]
	Let $\tU^+$ (and respectively, $\U^+$) be the subalgebra of $\tU$ (and respectively, $\U$) generated by $E_i$ $(i\in \I)$. Clearly, $\tU^+ \cong \U^+$, and we shall identify them. For $i\in\I$, denote by $r_{i}: \U^+ \rightarrow \U^+$, ${}_ir:\U^+\rightarrow \U^+$ the unique $\Q(q)$-linear maps \cite{Lus94}  such that
\begin{align}  \label{eq:rr}
\begin{split}
		r_{i}(1) = 0, \quad r_{i}(E_{j}) = \delta_{ij},
		\quad r_{i}(xx') = xr_{i}(x') + q^{i \cdot \mu'}r_{i}(x)x';
		\\
		{}_{i}r	(1) = 0, \quad {}_{i}r(E_{j}) = \delta_{ij},
		\quad {}_{i}r(xx') = {}_{i}r(x)x' + q^{\mu \cdot i}(x){}_{i}r(x'),
\end{split}
\end{align}
	for $x \in \U^+_{\mu}$ and $x' \in \U^+_{\mu'}$.
	
For any $x\in\U^+$, by \cite[Proposition 3.1.6]{Lus94} one has
	\begin{align}
		\label{eq:xFcomm}
		xF_i-F_ix=\frac{r_i(x)\tK_i-\tK_i'(_ir(x))}{q_i-q_i^{-1}}.
	\end{align}
	
	Let $(\I =\Iw \cup \Ib, \tau)$ be an admissible pair; cf. \cite[Definition 2.3]{Ko14}. Let $W_{\Ib}=\langle s_i\mid i\in\Ib\rangle$ be the   parabolic subgroup of $W$ with the longest element $\wb$.
	
	 Note that $\tU_{\Ib}$ (and respectively, $\U_{\Ib}$) is naturally a subalgebra of $\tU$ (and respectively, $\U$).  The $\imath$quantum group $\tUi$ is a (coideal) subalgebra of $\tU$ (see \cite{LW19a, CLW21}), which is generated by $\tU_{\Ib}$, $\tk_i =\tK_i \tK_{\tau i}'$  $(i\in \Iw)$, and
	\[
	B_i =F_i + T_{\wb} (E_{\tau i}) \tK_i' \qquad (i\in \Iw).
	\]
	Here $T_{w}=T''_{w,+1}$ denotes a braid group operator as in \cite{Lus94} for any $w\in W$. Note that $T_{\wb} (E_{\tau i}) \in \U^+ =\tU^+$. 
	
	In this Letter, we are mainly concerned about $B_i$, for $i\in \Iw$ with $\tau i=i$; in this case,
	$B_i =F_i + T_{\wb} (E_i) \tK_i' \in \tUi.$ It is known (cf. \cite{Ko14}) that $r_i(T_{w_\bullet} E_i) , {}_ir(T_{w_\bullet} E_i)\in \U_{\Ib}^+= \tU_{\Ib}^+ \subset \tUi$ and 
\begin{align}   \label{eq:rTB}
	[\rT, B_j]=0, \quad \text{for } i,j \in \Iw.
\end{align}
It follows by weight reason that
\begin{align}
	\label{eq:rTFE}
	[\rT,F_j]=0=[\rT, T_{w_\bullet} (E_{\tau j})\tK'_j],\quad \text{for } i, j\in\Iw.
\end{align}

	For $m\in \Z$, let $[m]_i =[m]_{q_i}$ denotes the quantum integer associate to $q_i$. Let $i\in \Iw$ with $\btau i= i$ (but we drop the assumption that $\wb i =i$ which was imposed in \cite{CLW18, CLW21}). The {\em $\imath${}divided powers} of $B_i$ in $\tUi$ are defined to be \begin{align}
		&& \B_{i,\odd}^{(m)}=\frac{1}{[m]_{i}!}
		\left\{ \begin{array}{ccccc}
			B_i\prod_{j=1}^k \Big(B_i^2 - [2j-1]_{i}^2 q_i \tk_i \, \rT \Big) & \text{if }m=2k+1,\\
			\prod_{j=1}^k \Big(B_i^2 - [2j-1]_{i}^2 q_i \tk_i \, \rT \Big) &\text{if }m=2k; \end{array}\right.
		\label{eq:iDPodd} \\
		\notag\\
		&& \B_{i,\ev}^{(m)}= \frac{1}{[m]_{i}!}
		\left\{ \begin{array}{ccccc}
			B_i\prod_{j=1}^k \Big(B_i^2 - [2j]_{i}^2 q_i \tk_i \, \rT\Big) & \text{if }m=2k+1,\\
			\prod_{j=1}^{k} \Big(B_i^2 - [2j-2]_{i}^2 q_i \tk_i \, \rT\Big) &\text{if }m=2k. \end{array}\right.
		\label{eq:iDPev}
\end{align}
	
	Given $\ov{p}\in \Z_2 =\{\ev, \odd\}$, the $\imath${}divided powers are determined by the following recursive relations, for $m\ge 0$:
\begin{eqnarray}
	\label{lem:dividied power}
	B_{i} \B_{i,\ov{p}}^{(m)}
	= \left\{\begin{array}{llll}
		&[m+1]_{i}\B_{i,\ov{p}}^{(m+1)}                                 & \text{if}\ \ov{p}\neq \ov{m}, \\
		&[m+1]_{i}\B_{i,\ov{p}}^{(m+1)} + [m]_{i} q_i \tk_i \, \rT \B_{i,\ov{p}}^{(m-1)} & \text{if}\ \ov{p}= \ov{m}.
	\end{array}\right.
\end{eqnarray}
We set $\B_{i,\ev}^{(m)}=0=\B_{i,\odd}^{(m)}$ for any $m<0$.

\subsection{New $\imath$divided powers for $\Ui$}
	
	The $\Q(q)$-algebra $\Ui =\Ui_\bvs$, for $\bvs =(\vs_i)_{i\in \Iw}$ (subject to some constraints \cite{BK15, BW21}), can be defined as a subalgebra of $\U$ (similar to $\tUi$ as a subalgebra of $\tU$). In particular, for $i\in \Iw$ with $\tau i=i$, we have $B_i =F_i +\vs_i T_{\wb} (E_i) K_i^{-1} \in \Ui.$ Alternatively, $\Ui$ is related to $\tUi$ by a central reduction:
\[
\Ui_\bvs =\tUi / \big(\tk_i -\vs_i \,\,(i=\tau i), \tk_i \tk_{\tau i} -\vs_i \vs_{\tau i} \,\,(i\neq \tau i) \big).
\]
	
	Let us specialize to the case $\tau i =i \in \Iw$, which is most relevant to us. Our parameter $\vs_i$ corresponds to the notation in \cite{BK15} as $\vs_i = - c_is(i)$. The parameters $c_i, s(i)$ therein were not needed separately. Similarly, the notation $\z_i = - s(i) \rT$ in \cite{BK15} is never needed separately, and instead $c_i \z_i$ and $\rT$ are all one needs. We have
\begin{align}    \label{eq:Bbz-1}
		c_i \z_i =\vs_i \, \rT.
\end{align}

	By a slight abuse of notation, the $\imath$divided powers in $\Ui$, denoted again by $\B_{i,\ov{p}}^{(m)}$, for $\ov{p}\in \Z_2$, are defined in almost the same way as in \eqref{eq:iDPodd}--\eqref{eq:iDPev}, with $\tk_i$ replaced by $\vs_i$:
\begin{align}
	&& \B_{i,\odd}^{(m)}=\frac{1}{[m]_{i}!}
	\left\{ \begin{array}{ccccc}
	B_i\prod_{j=1}^k \Big(B_i^2 - [2j-1]_{i}^2 q_i \vs_i \, \rT \Big) & \text{if }m=2k+1,\\
	\prod_{j=1}^k \Big(B_i^2 - [2j-1]_{i}^2 q_i \vs_i \, \rT \Big) &\text{if }m=2k; \end{array}\right.
		\label{eq:iDPodd2} \\
		\notag\\
	&& \B_{i,\ev}^{(m)}= \frac{1}{[m]_{i}!}
		\left\{ \begin{array}{ccccc}
			B_i\prod_{j=1}^k \Big(B_i^2 - [2j]_{i}^2 q_i \vs_i \, \rT\Big) & \text{if }m=2k+1,\\
			\prod_{j=1}^{k} \Big(B_i^2 - [2j-2]_{i}^2 q_i \vs_i \, \rT\Big) &\text{if }m=2k.     
		\end{array}\right.
		\label{eq:iDPev2}
\end{align}
	
\begin{rem}
	In the case when ${\wb} i =i$, we have $\rT=1$, and the $\imath$divided powers $\B_{i,\ov{p}}^{(m)}$ were introduced first in \cite{BW18a, BeW18} for a distinguished parameter $\vs_i=q_i^{-1}$ and they are $\imath$canonical basis elements in the modified $\imath$quantum group. The $\B_{i,\ov{p}}^{(m)}$ when ${\wb} i =i$ for a general parameter $\vs_i$ used in \cite{CLW18, CLW21} (denoted by $B_{i,\ov{p}}^{(m)}$) are obtained from the distinguished case above by a renormalization automorphism of $\Ui$.
		
	In case when ${\wb} i \neq i$ and then $\rT\neq 1$, $\B_{i,\ov{p}}^{(m)}$ in general do not lie in the $\Z[q,q^{-1}]$-form of $\Ui$. Toward the construction of $\imath$canonical basis, different $\imath$divided powers, $B_i^{(m)}$, which lie in the $\Z[q,q^{-1}]$-form of (modified) $\Ui$, for $\tau i =i \neq \wb i$, were introduced in \cite{BW21}.
\end{rem}

\section{A universality property for $\imath$quantum groups}
 \label{sec:univ}
 
 In this section, we shall establish a universality property on the structure constants appearing in the Serre-Lusztig relations.

\subsection{A  weak form of universality for $\Ui$}
  \label{subsec:A}

Recall the linear maps $r_i,  {}_i r$, for $i\in \I$ from \eqref{eq:rr}. For any $j,k\in\Iw$, by \eqref{eq:xFcomm}, we have
\begin{align}
\label{eq:FTEcom}
	F_jT_{w_\bullet}(E_{\tau k}) K_k^{-1}
		=q^{-k\cdot j}T_{w_\bullet}(E_{\tau k}) K_k^{-1} F_j
		+ \delta_{j,\tau k}Z_{\tau k} 
		+ \delta_{j,\tau k}Z_{\tau k}' K_k^{-2},
\end{align}
where we denote by $\mu$ the weight of ${}_{ k}r(T_{w_\bullet}E_{ k})$, and set
\begin{align}
\label{def:Z}
	Z_k=	\frac{r_{ k}(T_{w_\bullet}E_{ k})}{q_{ k}^{-1}-q_{ k}},\qquad 
	Z_k' =\frac{{}_{ k}r(T_{w_\bullet}E_{ k})}{q^{ k\cdot \mu}(q_{ k}-q_{ k}^{-1})}.
\end{align}
For any $\tau i=i\in\Iw$, by combining with \eqref{eq:rTFE}, we have 
\begin{align}
	\label{eq:ZZ}
	[Z_i,Z_j]=0=[Z_i,Z_j' K_j^{-2}],\quad \text{for }j\in\Iw.
\end{align}

For $\tau i =i \in \Iw$ and $i\neq j \in \Iw$, we denote
\begin{equation}  \label{eq:Bij3}
		S_{i,j;n} (B_i, B_j) := \sum_{r+s=1 -na_{ij}} (-1)^r  \qbinom{1 -na_{ij}}{r}_i B_i^{r} B_j^{n} B_i^{s}.
	\end{equation}
	
As a main step toward proving the universality property in Proposition \ref{prop:universalitystrong}, we first establish the following variant.

\begin{prop}
	\label{prop:universality}
Let $n\geq1$. For $\tau i=i\in\Iw$ and $i\neq j\in\Iw$, we have the following identity in $\Ui$:
\begin{align}  \label{eq:S=Cn}
		S_{i,j;n}(B_i, B_j) = \widehat C_{i,j;n}(B_i, B_j),
\end{align}
where $\widehat C_{i,j;n}(B_i, B_j)$ is a (non-commutative) polynomial in $B_i, B_j$ of the form
\begin{align}  \label{eq:Cn}
		\widehat C_{i,j;n}(B_i, B_j) =\sum_{r+s \le -1 -na_{ij}} \sum_{2m\leq n}\widehat{\varrho}_{r,s,m,n}^{(i,j,a_{ij})}\; \big(\vs_i \, Z_i \big)^{\frac{1 - na_{ij}-r-s}2} (\vs_jZ_{\tau j})^mB_i^r B_j^{n-2m} B_i^{s},
\end{align}
for some universal Laurent polynomials $\widehat{\varrho}_{r,s,m,n}^{(i,j,a_{ij})} \in \Z[q,q^{-1}]$ (i.e., they depend only on $a_{ij}$, $\epsilon_i$, $\epsilon_j$, and $r,s,m,n$). Moreover, if $\tau j\neq j$ then $\widehat{\varrho}_{r,s,m,n}^{(i,j,a_{ij})}=0$ for all $m>0$.
\end{prop}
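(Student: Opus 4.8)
The plan is to expand $S_{i,j;n}(B_i,B_j)$ by repeatedly moving the $F$-parts of the $B_i$ factors past $B_j^n$ and collecting the resulting correction terms. Recall $B_i = F_i + T_{\wb}(E_i)K_i^{-1}$ (as $\tau i = i$) and $B_j = F_j + \vs_j T_{\wb}(E_{\tau j})K_j^{-1}$. Since $i \neq j$, the summation $\sum_{r+s=1-na_{ij}}(-1)^r\qbinom{1-na_{ij}}{r}_i B_i^r B_j^n B_i^s$ is exactly the classical quantum Serre-type expression in the variables $B_i,B_j$ but now carried out inside $\Ui$, where $B_i,B_j$ fail to commute past each other in the naive way. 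First I would record the single-step commutator: using \eqref{eq:FTEcom} and the definitions \eqref{def:Z} of $Z_k,Z_k'$, the commutator $[B_i,B_j]$ produces, besides the $q$-commutator of the $F$-parts, precisely terms proportional to $\vs_i Z_i$ (from the $j=\tau k=i$ contribution, which here is absent unless the indices coincide) and $\vs_j Z_{\tau j}$. The key bookkeeping device is that each time an $F_j$ is pushed past a $T_{\wb}(E_{\tau j})K_j^{-1}$ with matching index one extracts a factor of $Z_{\tau j}$ or $Z'_{\tau j}K_j^{-2}$; similarly for the $i$-index one extracts $Z_i$.

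The main computation is to show that after all such moves, every surviving monomial has the shape displayed in \eqref{eq:Cn}: a power $(\vs_i Z_i)^{(1-na_{ij}-r-s)/2}$ recording how many $i$-contractions occurred, a power $(\vs_j Z_{\tau j})^m$ recording how many $j$-contractions occurred (each $j$-contraction consumes two of the $n$ copies of $B_j$, whence $n-2m$ remaining and the constraint $2m \le n$), and the reduced word $B_i^r B_j^{n-2m} B_i^s$ with $r+s \le -1-na_{ij}$ strictly below the top degree $1-na_{ij}$. The degree drop is forced: each contraction removes either two $B_i$'s (lowering $r+s$ by $2$ and raising the $Z_i$-exponent by $1$) or two $B_j$'s, so the total $i$-degree $r+s$ and the exponent $(1-na_{ij}-r-s)/2$ are complementary, and at least one contraction must occur because the leading term — the honest quantum Serre relation for the $F$-parts — vanishes. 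The coefficients $\widehat\varrho$ are then manifestly built only from the $q$-binomial coefficients $\qbinom{\cdot}{\cdot}_i$, the powers of $q^{-k\cdot j}$ in \eqref{eq:FTEcom}, and the scalar denominators in \eqref{def:Z}; since $K_k^{-2}$ and the braid operators act by $q$-weights determined solely by the Cartan data, these coefficients are genuinely universal Laurent polynomials in $q$ depending only on $a_{ij},\epsilon_i,\epsilon_j,r,s,m,n$. Throughout one uses the commutativity \eqref{eq:ZZ}, i.e.\ $[Z_i,Z_j]=0=[Z_i,Z'_jK_j^{-2}]$, to freely collect the $Z$-factors to the left without introducing further corrections.

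For the final clause, that $\widehat\varrho_{r,s,m,n}^{(i,j,a_{ij})}=0$ for all $m>0$ when $\tau j \neq j$, the point is a weight (or parity-of-index) obstruction. A factor $Z_{\tau j}=r_{\tau j}(T_{\wb}E_{\tau j})/(q_{\tau j}^{-1}-q_{\tau j})$ arises only when an $F$ of index $\tau j$ is contracted against $T_{\wb}(E_{\tau j})K_j^{-1}$; but when $\tau j \neq j$, the generator $B_j$ supplies an $F_j$ paired with $T_{\wb}(E_{\tau j})$, and the only $F$-factors available in $S_{i,j;n}$ have index $i$ or $j$, never $\tau j$. Hence no $j$-contraction producing $Z_{\tau j}$ can take place, so every monomial with $m>0$ has zero coefficient. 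I expect the genuine obstacle to be the careful induction establishing that the correction terms close up into the claimed closed form with uniform universal coefficients — that is, verifying that no monomial outside the stated range survives and that the coefficients never acquire $\vs$- or $\tau$-dependence beyond the explicitly displayed $(\vs_i Z_i)$ and $(\vs_j Z_{\tau j})$ prefactors; the weight-grading of $\U^+$ and the relations \eqref{eq:rTFE}, \eqref{eq:ZZ} are the tools that make this bookkeeping tractable.
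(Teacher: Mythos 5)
Your overall strategy --- expand via \eqref{eq:FTEcom}, collect the $Z$-corrections, and kill the top-degree part with Serre relations --- is the same as the paper's, and your weight argument for why $\widehat{\varrho}_{r,s,m,n}^{(i,j,a_{ij})}=0$ for $m>0$ when $\tau j\neq j$ is correct. But there is a genuine gap at the crucial step where you claim every surviving monomial has $r+s\le -1-na_{ij}$. You justify this by saying that at least one contraction must occur because ``the honest quantum Serre relation for the $F$-parts vanishes''; that disposes only of the $m=0$ top-degree term $\sum_{r+s=1-na_{ij}}(-1)^r\qbinom{1-na_{ij}}{r}_iF_i^rF_j^nF_i^s$. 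When $\tau j=j$, a $j$-contraction does \emph{not} lower the $i$-degree: rewriting the middle block $B_j^n$ internally produces terms $(\vs_jZ_j)^mF_i^rF_j^{n-2m}F_i^s$ with $m>0$ and still $r+s=1-na_{ij}$, which violate the constraint in \eqref{eq:Cn} and are not covered by the ordinary Serre relation. Eliminating them requires two further facts that your proposal does not supply: (i) the coefficient of such a term arising from $B_i^rB_j^nB_i^s$ with no $i$-contractions is independent of $r$ and $s$ (it comes purely from expanding $B_j^n$), so it factors out of the alternating sum; and (ii) the resulting sum $\sum_{r+s=1-na_{ij}}(-1)^r\qbinom{1-na_{ij}}{r}_iF_i^rF_j^{n-2m}F_i^s$ vanishes --- this is \emph{not} one of Lusztig's relations $f^-_{i,j;n-2m,1-na_{ij},e}=0$ on the nose (the $q$-powers $q_i^{2emra_{ij}}$ are absent), but its non-standard variant, \cite[Corollary~3.3]{CLW21}. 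The paper's proof hinges on exactly these two points, and without them the stated degree bound on $r+s$ is unproved.

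A secondary omission: the expansion also produces terms carrying a surviving $T_{\wb}(E_k)K_k^{-1}$ on the far left, and terms carrying $Z_k'K_k^{-2}$; these do not cancel among themselves. The paper collects them into the spaces $\mathbb U_{i,j}^+$ and $\mathbb T_{i,j}^+$ and disposes of them only at the very end, via the downward induction that replaces top-degree $F$-monomials by $B$-monomials together with the fact that $\Ui_{\bvs}\cap\big(\mathbb U_{i,j}^+ + \mathbb T_{i,j}^+\big)=0$. Your proposal asserts an identity between elements of $\Ui$ directly, but the computation as you describe it only yields a congruence modulo this junk; some version of the triangularity-plus-trivial-intersection argument is needed to convert it into the claimed identity \eqref{eq:S=Cn}.
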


\begin{proof}
	In the argument below, we assume that $j=\tau j$. (The $j\neq \tau j$ case follows from a similar analysis and can be obtained by setting $Z_{\tau j}=0$ and $Z_{\tau j}'=0$ everywhere.) Let $\U_{i,j}^-$ be the subalgebra generated by $F_i$ and $F_j$, and
\begin{align*}
	\mathbb U_{i,j}^+:=\sum_{\gamma\geq \alpha_i\text{ or }\gamma\geq \alpha_j} \U_\gamma^+\U_{i,j}^-\tU^0,\text{ and }\quad
	\mathbb T_{i,j}^+=\sum_{e,f\geq0,e+f>0}\U_{\Ib}\U_{i,j}^-K_i^{-2e}K_j^{-2f}.
\end{align*} 
By using \eqref{eq:FTEcom} and the definition $B_k=F_k+\vs_kT_{w_\bullet}E_{\tau k}K_k^{-1}$ for any $k\in\Iw$, one can expand out the term $B_i^aB_j^rB_i^b$ so that it satisfies
\begin{align*}
	B_i^aB_j^rB_i^b\in F_i^aF_j^rF_i^b+ \sum_{m,u,s,t}g_{i,j,a,b,r}^{m,u,s,t}(\vs_iZ_i)^{u+s+t}(\vs_jZ_j)^mF_i^{a-2u-t}F_j^{r-2m}F_i^{b-2s-t}+\mathbb U_{i,j}^+ + \mathbb T_{i,j}^+,
\end{align*}
where $g_{i,j,a,b,r}^{m,u,s,t}$ is a scalar. This result is a special case of \cite[Lemma 4.1]{Le19}. In fact, one starts with monomials in the $F_i, F_j, T_{w_\bullet}(E_i)K_i^{-1}$, and $T_{w_\bullet}(E_j)K_j^{-1}$. Terms of the form $T_{w_\bullet}(E_k)K_k^{-1}$ are moved to the left and new monomials are created via \eqref{eq:FTEcom} that have $Z_k$ or $Z_k'K_k^{-2}$ entries. Those monomials that have a $T_{w_\bullet}(E_k)K_k^{-1}$ that survives all the way on the left hand side of the expression (for either $k=i$ or $k=j$) become part of $\mathbb U_{i,j}^+$. Whenever a $Z_k'K_k^{-2}$ appears, we move the $K_k^{-2}$ to the right; if, in addition, there are no $T_{w_\bullet}(E_k)K_k^{-1}$ terms remaining all the way on the left, then the resulting monomial is in $ \mathbb T_{i,j}^+$. It follows from \eqref{eq:FTEcom}, \eqref{eq:rTFE} and \eqref{eq:ZZ} that each $g_{i,j,a,b,r}^{m,u,s,t}\in \Z[q,q^{-1}]$ and depends only on $a_{ij},\epsilon_i,\epsilon_j,a,b,m,u,s,t$.  
	
In the special case where $u=t=s=0$, each term of the form $(\vs_jZ_j)^mF_i^aF_j^{r-2m}F_i^b$ comes from expanding out $B_j^r$ inside the term $F_i^aB_j^rF_i^b$ and moving powers of $\vs_jZ_j$ to the left. Hence $g_{i,j,a,b,r}^{m,0,0,0}$ is independent of $a$ and $b$. 
	
Return to $S_{i,j;n}(B_i,B_j)$. From the above analysis, we have  
\begin{align*}
	\sum_{r+s=1-na_{ij}}(-1)^r\qbinom{1-na_{ij}}{r}_i\Big(F_i^rF_j^nF_i^s+\sum_{m} g_{i,j,r,s,n}^{m,0,0,0} (\vs_jZ_j)^mF_i^rF_j^{n-2m}F_i^s \Big)=0	
\end{align*}
by using the Serre-Lusztig relation and its non-standard variant (cf.  \cite[Corollary~ 3.3]{CLW21}). It follows that $S_{i,j;n}(B_i,B_j)$ is contained in the set 
\begin{align}
	\label{eq:reduce1}
	\sum_{u+v\leq -1-na_{ij}} \Big(\sum_m d_{u,v,m} (\vs_iZ_i)^{\frac{1-na_{ij}-u-v}{2}} (\vs_jZ_j)^mF_i^uF_j^{n-2m}F_i^v\Big)+\mathbb U_{i,j}^+ + \mathbb T_{i,j}^+,
\end{align}
where the coefficients $d_{u,v,m}$ come from sums of terms of the form $\qbinom{1-na_{ij}}{r}_ig_{i,j,r,s,n}^{\cdot,\cdot,\cdot,\cdot}$ and clearly are Laurent polynomials of the desired form. 
	
Let $z$ be the maximum of $u+v+n-2m$ with $d_{u,v,m}\neq0$. Replacing terms of the form $F_i^uF_j^{n-2m}F_i^v$ for $u+v+n-2m=z$ with $B_i^uB_j^{n-2m}B_i^v$ in \eqref{eq:reduce1} yields
\begin{align*}
	S_{i,j;n}(B_i,B_j)-&\sum_{u+v+n-2m=z} d_{u,v,m} (\vs_iZ_i)^{\frac{1-na_{ij}-u-v}{2}} (\vs_jZ_j)^mB_i^uB_j^{n-2m}B_i^v
		\\
	\in& \sum_{u+v+n-2m<z} d'_{u,v,m} (\vs_iZ_i)^{\frac{1-na_{ij}-u-v}{2}} (\vs_jZ_j)^mF_i^uF_j^{n-2m}F_i^v+\mathbb U_{i,j}^+ + \mathbb T_{i,j}^+,
\end{align*}
where once again the coefficients $d'_{u,v,m}$ have the desired form. Repeating this process and noting that $\Ui_\bvs\cap \big(\mathbb U_{i,j}^+ + \mathbb T_{i,j}^+\big)=0$, we have proved the proposition.
\end{proof}

\subsection{The universality for $\Ui$}

Using the Serre-Lusztig relations for quasi-split $\imath$quantum groups obtained in \cite{CLW21}, we can sharpen the statement in Proposition~ \ref{prop:universality}.

\begin{prop} [Universality]
	\label{prop:universalitystrong}
Let $n\geq1$. For $\tau i=i\in\Iw$ and $i\neq j\in\Iw$, we have the following identity in $\Ui$:
\begin{align}  \label{eq:S=Cnnew}
		S_{i,j;n}(B_i, B_j) = C_{i,j;n}(B_i, B_j),
\end{align}
where $C_{i,j;n}(B_i, B_j)$ is a (non-commutative) polynomial in $B_i, B_j$ of the form
\begin{align}  \label{eq:Cnnew}
	C_{i,j;n}(B_i, B_j) =\sum_{r+s \le -1 -na_{ij}} \varrho_{r,s|n}^{(i,j,a_{ij})}\; \big(\vs_i \, r_i(T_{w_\bullet}E_i) \big)^{\frac{1 - na_{ij}-r-s}2} B_i^r B_j^{n} B_i^{s},
\end{align}
for some universal Laurent polynomials $\varrho_{r,s|n}^{(i,j,a_{ij})}\in \Z[q,q^{-1}]$ (which depend only on $a_{ij}$, $\epsilon_i$, $\epsilon_j$ and $r,s,n$). 
\end{prop}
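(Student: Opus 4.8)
The plan is to upgrade Proposition~\ref{prop:universality} by eliminating the unwanted $(\vs_j Z_{\tau j})^m$ factors (for $m>0$) that appear in $\widehat C_{i,j;n}$, thereby reducing to a single-sum expression in the $B_i, B_j$ of the simpler shape \eqref{eq:Cnnew}. The starting point is the identity \eqref{eq:S=Cn}, which already shows that $S_{i,j;n}(B_i,B_j)$ equals a polynomial whose coefficients $\widehat\varrho_{r,s,m,n}^{(i,j,a_{ij})}$ are universal Laurent polynomials depending only on $a_{ij},\epsilon_i,\epsilon_j,r,s,m,n$. The key observation is that this universality lets me compute those coefficients in any convenient $\imath$quantum group of the \emph{same} local Cartan data $(\epsilon_i,\epsilon_j,a_{ij})$, and in particular in a quasi-split example where $\wb i=i$ forces $Z_i = r_i(T_{\wb}E_i) = 1$ and the full Serre-Lusztig relations are already known in closed form from \cite{CLW21}.

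First I would fix the ambient setting to a quasi-split $\imath$quantum group (so $\Ib=\emptyset$, $\wb=1$, $T_{\wb}=\mathrm{id}$), where $T_{\wb}(E_i)=E_i$, hence $r_i(T_{\wb}E_i)=r_i(E_i)=1$ and likewise $Z_i, Z_{\tau j}$ collapse to explicit scalars. In this comparison model the left-hand side $S_{i,j;n}(B_i,B_j)$ is governed by the Serre-Lusztig relations of \cite{CLW21}, which express the same quantity purely in terms of $\vs_i$-powers times monomials $B_i^r B_j^n B_i^s$ with \emph{no} surviving $(\vs_j Z_{\tau j})^m$ contributions for $m>0$. Matching this known closed form against the specialization of \eqref{eq:Cn} to the quasi-split model, and using that the $\widehat\varrho$ are universal (independent of the particular $\imath$quantum group, depending only on the local data), I can conclude that the coefficients of the genuinely ``$j$-quadratic'' terms—those with $m>0$, i.e. carrying a factor $(\vs_j Z_{\tau j})^m$—must vanish identically as Laurent polynomials. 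Setting $\varrho_{r,s|n}^{(i,j,a_{ij})} := \widehat\varrho_{r,s,0,n}^{(i,j,a_{ij})}$ and recalling $Z_i = r_i(T_{\wb}E_i)/(q_i^{-1}-q_i)$ so that powers of $\vs_i Z_i$ are, up to a universal scalar, powers of $\vs_i\, r_i(T_{\wb}E_i)$, I recover exactly the form \eqref{eq:Cnnew} back in the general $\imath$quantum group $\Ui$.

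The crucial point making this work is that the coefficient $\widehat\varrho_{r,s,m,n}^{(i,j,a_{ij})}$ is the \emph{same} Laurent polynomial in every $\imath$quantum group sharing the data $(\epsilon_i,\epsilon_j,a_{ij})$; thus a vanishing established in one model (quasi-split) propagates to all. I would therefore carefully argue that for any prescribed local Cartan data one can realize it inside a quasi-split Satake diagram where \cite[Theorem/Corollary]{CLW21} applies, so the comparison is legitimate and not vacuous. This requires checking that the passage from $\widehat C_{i,j;n}$ to $C_{i,j;n}$ is a statement about coefficients alone, and that the relation $\Ui_\bvs \cap (\mathbb U_{i,j}^+ + \mathbb T_{i,j}^+)=0$ (used already in Proposition~\ref{prop:universality}) guarantees that the polynomial expression of $S_{i,j;n}(B_i,B_j)$ in the PBW-type basis is unique, so that coefficient comparison is valid.

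The main obstacle I anticipate is the realization step: ensuring that for arbitrary prescribed $(\epsilon_i,\epsilon_j,a_{ij})$ there genuinely is a quasi-split $\imath$quantum group with those local data in which the relevant Serre-Lusztig relation of \cite{CLW21} holds in a form with no residual $(\vs_j Z_{\tau j})^m$ terms for $m>0$. One must verify both that the Cartan data can be matched and that the cited closed-form relation, specialized to $r_i(T_{\wb}E_i)=1$, indeed contains only the $m=0$ monomials $B_i^r B_j^n B_i^s$; any implicit dependence on global (as opposed to local $\{i,j\}$) structure would break the universality argument. I expect this to be routine given the explicit formulas in \cite{CLW21}, but it is the step where the logical force of the ``universality'' really has to be pinned down.
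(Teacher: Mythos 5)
Your proposal follows essentially the same route as the paper's own proof: starting from Proposition~\ref{prop:universality}, specializing to a quasi-split model where $r_i(T_{\wb}E_i)=1$, invoking the known Serre-Lusztig relations of \cite{CLW21} to force $\widehat{\varrho}_{r,s,m,n}^{(i,j,a_{ij})}=0$ for $m>0$, and then converting $\vs_i Z_i$ into $\vs_i\, r_i(T_{\wb}E_i)$ up to the universal scalar $(q_i^{-1}-q_i)$. The only cosmetic difference is your explicit worry about realizing the local Cartan data in a quasi-split Satake diagram, which the paper treats as immediate, and a harmless slip in first writing $Z_i=1$ before correctly accounting for the factor $q_i^{-1}-q_i$.
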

\noindent (The identities \eqref{eq:S=Cnnew}--\eqref{eq:Cnnew} remain valid in $\tUi$ when $\vs_i$ in $C_{i,j;n}(B_i, B_j)$ above is replaced by $\tk_i$.)

\begin{proof}
	By Proposition~\ref{prop:universality}, we have a Serre-Lusztig type relation (for $i\neq j \in \Iw$) of the form \eqref{eq:S=Cn}--\eqref{eq:Cn},  $S_{i,j;n}(B_i, B_j) =\widehat C_{i,j;n}(B_i, B_j)$, where $\widehat C_{i,j;n}(B_i, B_j)$ is an expression with universal coefficients $\widehat{\varrho}_{r,s,m,n}^{(i,j,a_{ij})} \in \Z[q,q^{-1}]$, which depend only on $a_{ij}$, $\epsilon_i$, $\epsilon_j$, and $r,s,m,n$. 
	
\vspace{2mm}
{\bf Claim ($\star$).} We have $\widehat{\varrho}_{r,s,m,n}^{(i,j,a_{ij})} =0$, for $m>0$. 
\vspace{2mm}
	
Let us prove the Claim. In order to determine $\widehat{\varrho}_{r,s,m,n}^{(i,j,a_{ij})}$, we shall restrict ourselves to the setting of quasi-split $\imath$quantum groups, where $r_i(T_{w_\bullet}E_i)=1$, i.e., $Z_i=\frac{1}{q_i^{-1}-q_i}$. By \cite[Theorem 4.1]{CLW21}, for quasi-split $\imath$quantum groups $\tUi$, we have 
\begin{align}
	\label{eq:SerreBn1}
\sum_{a+b=1-na_{ij}} (-1)^r B^{(a)}_{i,\ov{p}} B_j^n B_{i,\ov{p}+\ov{na_{ij}}}^{(b)}=0, 
\quad (n\geq 0),
\end{align}
where the $\imath$divided powers $B_{i,\ov{p}}^{(m)}$ are as in \eqref{eq:iDPodd2}-\eqref{eq:iDPev2} but with $r_i(T_{w_\bullet}E_i)=1$.

Expanding $[1-na_{ij}]_i!\cdot$LHS\eqref{eq:SerreBn1} into a linear combination of monomials of the form $B_i^r B_j^n B_i^s$, we have
\begin{align}  \label{eq:Cnnewqs}
S_{i,j;n}(B_i, B_j) =\sum_{r+s \le -1 -na_{ij}} \varrho_{r,s|n}^{(i,j,a_{ij})}\; 
\vs_i^{\frac{1 - na_{ij}-r-s}2} B_i^r B_j^{n} B_i^{s},
\end{align}
for some universal Laurent polynomials $\varrho_{r,s|n}^{(i,j,a_{ij})}\in \Z[q,q^{-1}]$ (i.e., they depend only on $a_{ij}$, $\epsilon_i$, $\epsilon_j$ and $r,s,n$). As \eqref{eq:Cnnewqs} does not 
involve $\vs_j Z_{\tau j}'$, , a comparison of \eqref{eq:Cnnewqs} and \eqref{eq:S=Cn}--\eqref{eq:Cn} with $Z_i=\frac{1}{q_i^{-1}-q_i}$  shows that 
$\widehat{\varrho}_{r,s,m,n}^{(i,j,a_{ij})} =0$, for $m>0$. Claim ($\star$) is proved. 

A comparison of \eqref{eq:Cnnewqs} and \eqref{eq:S=Cn}--\eqref{eq:Cn} with $Z_i=\frac{1}{q_i^{-1}-q_i}$ again further shows that 
\begin{align}
 \label{eq:rho2}
\widehat{\varrho}_{r,s,0,n}^{(i,j,a_{ij})} =\varrho_{r,s|n}^{(i,j,a_{ij})}\cdot (q_i^{-1}-q_i)^{\frac{ na_{ij}+r+s-1}2},
\quad \text{for all } r,s,n.
\end{align}

Now back to arbitrary $\imath$quantum groups $\tUi$. 
By Claim ($\star$) and \eqref{eq:rho2}, we can rewrite the identity \eqref{eq:S=Cn}--\eqref{eq:Cn} in the  precise form of the Serre-Lusztig relation \eqref{eq:S=Cnnew}--\eqref{eq:Cnnew}. The proposition is proved. 
\end{proof}
	
\begin{rem}
		For $n=1$, the universality result in Proposition \ref{prop:universalitystrong} has been (somewhat implicitly) known in \cite{Le03, Ko14} and made explicit in \cite{DeC19}. Actually, a very careful and tedious work was carried out in \cite{DeC19} to describe explicitly these universal polynomials $\varrho_{r,s|1}^{(i,j,a_{ij})}$ in $C_{i,j;1}(B_i,B_j)$; see \cite[Theorem~4.7]{DeC19}. 
\end{rem}

	
\section{The Serre-Lusztig relations in $\imath$quantum groups} 
	 \label{sec:SL}

\subsection{Serre-Lusztig relations of minimal degree} 
	
	We consider $\imath$quantum groups $\Ui$ of arbitrary Kac-Moody type, where $\Ib \neq \emptyset$ is allowed.
	
\begin{thm}
[Serre-Lusztig relations  of minimal degree]
	\label{thm:min}
For any $i \neq j\in \Iw$ such that $\tau i = i$ and $\ov{t} \in\Z_2$, the following identities hold in $\tUi$ for $n\geq 1$:
\begin{align}
\sum_{r+s=1-na_{ij}} (-1)^r \B^{(r)}_{i,\ov{p}} B_j^n \B_{i,\ov{p}+\ov{na_{ij}}}^{(s)} &=0, 
		\label{eq:SerreBn12}
			\\
\sum_{r+s=1-na_{ij}} (-1)^r \B^{(r)}_{i,\ov{p}} B_{j, \ov{t}}^{(n)} \B_{i,\ov{p}+\ov{na_{ij}}}^{(s)} &=0.
		\label{eq:SerreBn10}
\end{align}
\end{thm}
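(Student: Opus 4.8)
The plan is to deduce both identities from the universality property of Proposition~\ref{prop:universalitystrong} together with the Serre-Lusztig relations for quasi-split $\imath$quantum groups established in \cite{CLW21}, by exploiting that the new $\imath$divided powers $\B_{i,\ov p}^{(m)}$ differ from their quasi-split counterparts only through the central substitution $\tk_i \leadsto \tk_i\, \rT$.

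I would first record the key structural observation: comparing \eqref{eq:iDPodd}--\eqref{eq:iDPev} with the formulas obtained by setting $\rT=1$ (the quasi-split $\imath$divided powers $B_{i,\ov p}^{(m)}$ of \cite{CLW21}), each $\B_{i,\ov p}^{(m)}$ is obtained from $B_{i,\ov p}^{(m)}$ by replacing every occurrence of $\tk_i$ by $\tk_i\, \rT$. By \eqref{eq:rTB} we have $[\rT, B_i]=[\rT, B_j]=0$, and $\rT$ commutes with $\tk_i$ as well, so $\rT$ acts as a transparent central factor in every product appearing in \eqref{eq:SerreBn12}--\eqref{eq:SerreBn10}. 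Expanding the left-hand side of \eqref{eq:SerreBn12} by means of the defining recursion \eqref{lem:dividied power} therefore yields an expression $\sum_{a+b+2K=1-na_{ij}} \nu_{a,b,K}\,(\tk_i\,\rT)^K B_i^a B_j^n B_i^b$, whose scalar coefficients $\nu_{a,b,K}$ are exactly those of the corresponding quasi-split expansion.

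The main step is to transfer the quasi-split vanishing. In the quasi-split case the relation \eqref{eq:SerreBn1} of \cite[Theorem~4.1]{CLW21} asserts that this combination vanishes when $\rT=1$; there it is derived from the monomial relations $S_{i,j;n'}=C_{i,j;n'}$ through a recursion of the type recorded in Theorem~\ref{thm:recursion}. By Proposition~\ref{prop:universalitystrong} the identical monomial relations, with the same universal structure constants and with $1$ replaced by $\rT$, hold in arbitrary $\tUi$. Since that recursion only manipulates these structure constants and commutes the central $\rT$ past $B_i$ and $B_j$, it applies verbatim in the general setting, yielding \eqref{eq:SerreBn12}.

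For \eqref{eq:SerreBn10} I would expand the $j$-divided power as a sum $\sum_m e_{n,m}\,\zeta_j^m\,B_j^{n-2m}$ with a central coefficient $\zeta_j$ commuting with $B_i$ and with the $\B_{i,\ov p}^{(\cdot)}$ (again by \eqref{eq:rTB}). Pulling these central factors out of the sum reduces each summand to a Serre-Lusztig combination for $B_j^{n-2m}$ at degree $1-na_{ij}$; because $a_{ij}\le 0$ forces $1-na_{ij}\ge 1-(n-2m)a_{ij}$, each of these sits at or above its own minimal degree and so is supplied, in quasi-split form, by \cite{CLW21} and transferred by the same universality argument. I expect the principal obstacle to lie in the bookkeeping that legitimizes the substitution $\tk_i\leadsto\tk_i\,\rT$ --- namely checking that each reduction step of the \cite{CLW21} recursion really does treat $\rT$ as an inert central scalar --- together with, for \eqref{eq:SerreBn10}, correctly matching the non-minimal degrees that arise.
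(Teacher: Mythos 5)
Your argument for \eqref{eq:SerreBn12} is essentially the paper's proof: both treat $\rT$ as an inert central scalar (via \eqref{eq:rTB} and the centrality of $\tk_i$), observe that the monomial expansion of the $\imath$divided-power sum has universal coefficients, so that the quasi-split identity of \cite{CLW21} upgrades to the formal identity $\sum_{r}(-1)^r\B^{(r)}_{i,\ov p}B_j^n\B^{(s)}_{i,\ov p+\ov{na_{ij}}}=[1-na_{ij}]_i!^{-1}\big(S_{i,j;n}-C_{i,j;n}\big)$ valid with $\rT$ reinstated, and then invoke Proposition~\ref{prop:universalitystrong} to kill the right-hand side.

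The one place you diverge is \eqref{eq:SerreBn10}. The paper disposes of it in one line by citing \cite[Proposition~3.2]{CLW21}, whereas you expand $B_{j,\ov t}^{(n)}=\sum_m e_{n,m}\,\zeta_j^m\,B_j^{n-2m}$ and reduce to the relations $\sum_{r+s=1-na_{ij}}(-1)^r\B^{(r)}_{i,\ov p}B_j^{n-2m}\B^{(s)}_{i,\ov p+\ov{na_{ij}}}=0$, which sit $-2ma_{ij}\ge 0$ above the minimal degree for the middle term $B_j^{n-2m}$. That reduction is fine, but your justification of these non-minimal inputs --- ``transferred by the same universality argument'' --- is not supported by Propositions~\ref{prop:universality}--\ref{prop:universalitystrong} as stated: they only identify $S_{i,j;n'}$ at its own minimal degree $1-n'a_{ij}$, and give no universal normal form for $\sum(-1)^r\qbinom{1-na_{ij}}{r}_iB_i^rB_j^{n'}B_i^s$ at the strictly higher degree $1-na_{ij}$. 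The standard repair, and what \cite[Proposition~3.2]{CLW21} and Theorem~\ref{thm:nstd} actually encode, is to derive the higher-degree relations from the already-established minimal-degree ones \eqref{eq:SerreBn12} by left- and right-multiplication by $B_i$ together with the recursion \eqref{lem:dividied power}, inducting on the excess degree; with that substitution your argument for \eqref{eq:SerreBn10} closes.
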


	We do not recall the precise formulas for the $\imath$divided powers $B_{j, \ov{t}}^{(n)}$ in 3 cases, and we refer to \cite[(5.12)]{BW21} and \cite[(5.5)]{CLW21} for details.
	
		For $n=1$, the identity \eqref{eq:SerreBn12} reduces to the $\imath$Serre relation \eqref{eq:S2b} in $\tUi$. In case ${\wb} i =i$ and thus $\rT=1$, the relations in Theorem~\ref{thm:min} reduce to \cite[Theorem~A]{CLW21}, and the $\imath$Serre relation \eqref{eq:S2b} was obtained in \cite{CLW18}.

\begin{proof}[Proof of Theorem~\ref{thm:min}]
	As explained in \cite[Introduction]{CLW21},  \eqref{eq:SerreBn10} follows from  \eqref{eq:SerreBn12} by \cite[Proposition~3.2]{CLW21}. Hence it suffices to prove \eqref{eq:SerreBn12}, or its $\Ui$-variant, where $\tk_i$ is replaced by $\vs_i$ in the $\imath$divided powers.
		
By Proposition~\ref{prop:universalitystrong}, we have a Serre-Lusztig relation (for $i\neq j \in \Iw$) of the form \eqref{eq:S=Cnnew}, i.e., $S_{i,j;n}(B_i, B_j) =C_{i,j;n}(B_i, B_j)$. 
		
The above discussion remains valid in the setting of quasi-split $\imath$quantum groups where $\rT=1$; in this case, we already have a Serre-Lusztig relation (for $i\neq j \in \Iw$) of the form \eqref{eq:SerreBn12}, where $\vs_i\rT$ reduces to $\vs_i$ in the definition of $\B_{i,\ov{p}}^{(r)}$; see \cite[(3.9)]{CLW18}. Then we have the following expansion in terms of (non-commutative) monomials in $B_i, B_j$, where it is understood that $\rT=1$ in the $\imath$divided powers and in $C_{i,j;n}(B_i, B_j)$:
\begin{align}
\label{eq:BBSC}
	\sum_{r=0}^{1-na_{ij}} (-1)^r & \B_{i,\overline{na_{ij}}+\overline{p_i}}^{(r)}B_j^n \B_{i,\overline{p}_i}^{(1-a_{ij}-r)} |_{\rT=1} \\
	&=[1-na_{ij}]_i!^{-1} \Big( S_{i,j;n}(B_i,B_j) -C_{i,j;n}(B_i, B_j)|_{\rT=1} \Big).
	\notag
\end{align}
Indeed, the universal polynomials $\varrho_{r,s|n}^{(i,j,a_{ij})}$ appearing in $C_{i,j;n}(B_i, B_j)$ from \eqref{eq:Cnnew} are determined from the expansion of the LHS above as in Proposition \ref{prop:universalitystrong}.
		
Return to the setting of general $\imath$quantum groups. The above formula \eqref{eq:BBSC} remains valid when replacing the scalar $\vs_i$ by $\vs_i \rT =c_i \z_i$ (which can be regarded as a commuting variable by \eqref{eq:rTB} when dealing with these relations) on both sides. The effect of such replacement is the removal of the restriction $|_{\rT=1}$ on both sides of \eqref{eq:BBSC}, that is, the following identity holds:
\begin{align}
 \label{eq:BBSC2}
	\sum_{r=0}^{1-na_{ij}} (-1)^r  \B_{i,\overline{na_{ij}}+\overline{p_i}}^{(r)}B_j^n \B_{i,\overline{p}_i}^{(1-a_{ij}-r)}  
	&=[1-na_{ij}]_i!^{-1} \Big( S_{i,j;n}(B_i,B_j) -C_{i,j;n}(B_i, B_j)\Big).
\end{align}
Since RHS\eqref{eq:BBSC2} $=0$ by \eqref{eq:S=Cnnew},  the identity \eqref{eq:SerreBn12} follows.
\end{proof}
	
\begin{rem}
Note that it is possible to deduce the formulas \eqref{eq:SerreBn12}--\eqref{eq:SerreBn10} in the $n=1$ case directly from  \cite[Proposition~4.6, Theorem~4.7]{DeC19}, though the proofs we provide here do not rely on the formulas of \cite{DeC19}.  Moreover,  it is instructive to compare the $\imath$Serre relation in a canonical form \eqref{eq:S2b} (as in \cite{CLW18}) to a  complicated formulation in \cite[Theorem~4.7]{DeC19}. They coincide up to a scalar multiple of $[1-a_{ij}]_i!$.  
\end{rem}

\begin{rem}
	\label{rem:closed}
When the parameter $\vs_i$ satisfies the conditions in \cite[(3.7)]{BW21} (which goes back to \cite{BK15} in some form), it follows from \cite[(5.10)]{BW21} (or \cite[Theorem~3.11(2)]{BK15}) that $q_i \vs_i \rT$ is bar invariant. Hence the relation \eqref{eq:S2b} in the $\Ui$ setting is manifestly bar invariant in this case. Such a bar invariance was also observed  in \cite{DeC19} based on the explicit formulas therein.
\end{rem}

\begin{rem}
	Combining the relations obtained by Letzter, Kolb and Balagovic (cf., e.g., \cite{Le03, Ko14, BK15}) and \eqref{eq:S2b} in this Letter, all the Serre type relations between $B_i$ and $B_j$ for $\imath$quantum groups $\tUi$ (or $\Ui$) of arbitrary Kac-Moody type have been formulated in {\em clean and closed} formulas in terms of $\imath$divided powers, except in the case when $\tau(i) =i \in \Iw$ and $j\in \Ib$; examples in this exceptional case can be found in \cite{BK15}. For a different expression of defining relations in this case, see \cite{KY21}.
\end{rem}
	
\begin{example}
	\label{ex:white}
Let  $j\neq i\in \Iw$ with $\tau i=i$. With the help of \eqref{eq:Bbz-1}, the formula \eqref{eq:S2b} specializes to
	\begin{enumerate}
	\item
		$B_i^2B_j -[2]_i B_iB_jB_i +B_jB_i^2 =q_i c_i\z_i B_j$, for $a_{ij}=-1$,
	\item
		$B_i^3B_j -[3]_i B_i^2B_jB_i +[3]_i B_iB_jB_i^2 -B_jB_i^3 = [2]_i^2 q_i c_i\z_i (B_iB_j - B_jB_i)$, for $a_{ij}=-2$.
\end{enumerate}
The above two formulas, together with a formula for $a_{ij}=-3$, were earlier obtained  in \cite[Theorem~ 3.7(2)]{BK15} by rather involved computations.
\end{example}

\subsection{Definition of $\tf_{i,j;n,m,\ov{p},\ov{t},e}$ and $\tf_{i,j;n,m,\ov{p},\ov{t},e}'$}
	
Let $i\neq j\in \Iw$ be such that $\btau i=i$. For $m\in\Z$, $n\in\Z_{\geq0}$, $e=\pm1$ and $\ov{p},\ov{t} \in \Z_2$, we define elements $\tf_{i,j;n,m,\ov{p},\ov{t},e}$ and $\tf_{i,j;n,m,\ov{p},\ov{t},e}'$ in $\tUi$ below, depending on the parity of $m-na_{ij}$. (They are simply modified from those in the same notations in \cite{CLW21}, with a substitution of $q_i\tk_i$ by $q_i\tk_i \rT$.)
	
If $m-na_{ij}$ is odd, we let
\begin{align}
		\label{eq:m-aodd}
& \tf_{i,j;n,m,\ov{p},\ov{t},e} =
		\\
& \sum_{u\geq0}(q_i\tk_i \rT)^{u} \Big\{
	\sum_{\stackrel{ r+s+2u=m}{\ov{r}=\ov{p}+\ov{1}}}
	(-1)^r q_i^{-e((m+na_{ij})(r+u)-r)}\qbinom{\frac{m+na_{ij}-1}{2}}{u}_{q_i^2} \B^{(r)}_{i,\ov{p}} B_{j,\ov{t}}^{(n)} \B^{(s)}_{i,\ov{p}+\ov{na_{ij}}}
		\notag \\ \notag
&\quad
	+\sum_{\stackrel{ r+s+2u=m}{\ov{r}=\ov{p}}}
	(-1)^{r}q_i^{-e((m+na_{ij}-2)(r+u)+r)}\qbinom{\frac{m+na_{ij}-1}{2}}{u}_{q_i^2} \B^{(r)}_{i,\ov{p}} B_{j,\ov{t}}^{(n)} \B^{(s)}_{i,\ov{p}+\ov{na_{ij}}}\Big\} ;
\end{align}
if $m-na_{ij}$ is even, then we let
\begin{align}
\label{eq:m-aeven}
& \tf_{i,j;n,m,\ov{p},\ov{t},e} =
		\\
&\sum_{u\geq0}(q_i\tk_i \rT)^{u}
	\Big\{
	\sum_{\stackrel{ r+s+2u=m}{ \ov{r}=\ov{p} +\ov{1}}}
	(-1)^{r} q_i^{-e(m+na_{ij}-1)(r+u)}
	\qbinom{\frac{m+na_{ij}}{2}}{u}_{q_i^2} \B^{(r)}_{i,\ov{p}} B_{j,\ov{t}}^{(n)} \B^{(s)}_{i,\ov{p}+\ov{na_{ij}}}
	\notag \\
&\quad +\sum_{\stackrel{ r+s+2u=m} {\ov{r}=\ov{p} }}
	(-1)^{r} q_i^{-e(m+na_{ij}-1)(r+u)}
	\qbinom{\frac{m+na_{ij}-2}{2}}{u}_{q_i^2} \B^{(r)}_{i,\ov{p}} B_{j,\ov{t}}^{(n)} \B^{(s)}_{i,\ov{p}+\ov{na_{ij}}}\Big\}. \notag
\end{align}

If $m-na_{ij}$ is odd, we let
\begin{align}
	\label{eq:evev1}
	& \tf_{i,j;n,m,\ov{p},\ov{t},e}'  =
		\\
	&\sum_{u\geq0}(q_i\tk_i \rT)^{u} \Big\{
	\sum_{\stackrel{r+s+2u=m}{\ov{r}=\ov{p}+\ov{1}}}
	(-1)^{r}q_i^{-e((m+na_{ij})(r+u)-r)}\qbinom{\frac{m+na_{ij}-1}{2}}{u}_{q_i^2} \B^{(s)}_{i,\ov{p}} B_{j,\ov{t}}^{(n)} \B^{(r)}_{i,\ov{p}+\ov{na_{ij}}}
		\notag \\ \notag
	& +\sum_{\stackrel{ r+s+2u=m}{\ov{r}=\ov{p}}}
	(-1)^{r}q_i^{-e((m+na_{ij}-2)(r+u)+r)}\qbinom{\frac{m+na_{ij}-1}{2}}{u}_{q_i^2} \B^{(s)}_{i,\ov{p}} B_{j,\ov{t}}^{(n)} \B^{(r)}_{i,\ov{p}+\ov{na_{ij}}}\Big\};
\end{align}
if $m-na_{ij}$ is even, then we let
\begin{align}
	\label{eq:evev0}
	& \tf_{i,j;n,m,\ov{p},\ov{t},e}' =
		\\
	& \sum_{u\geq0}(q_i\tk_i \rT)^{u}\Big\{
	\sum_{\stackrel{ r+s+2u=m}{\ov{r}=\ov{p} +\ov{1}}}
	(-1)^{r} q_i^{-e(m+na_{ij}-1)(r+u)}
	\qbinom{\frac{m+na_{ij}}{2}}{u}_{q_i^2} \B^{(s)}_{i,\ov{p}} B_{j,\ov{t}}^{(n)} \B^{(r)}_{i,\ov{p}+\ov{na_{ij}}}
		\notag \\
	&+\sum_{\stackrel{ r+s+2u=m}{ \ov{r}=\ov{p}}}
	(-1)^{r} q_i^{-e(m+na_{ij}-1)(r+u)}
	\qbinom{\frac{m+na_{ij}-2}{2}}{u}_{q_i^2} \B^{(s)}_{i,\ov{p}} B_{j,\ov{t}}^{(n)} \B^{(r)}_{i,\ov{p}+\ov{na_{ij}}}\Big\}. \notag
\end{align}

\subsection{Serre-Lusztig relations in $\tUi$} 
Denote
\begin{equation*}
	f_{i,j;n,m,e}^- = \sum_{r+s=m} (-1)^r q_i^{er(1-na_{ij}-m)} F_i^{(r)} F_j^{(n)} F_i^{(s)},
\end{equation*}
for $i\neq j \in \I$, $n>0$, and $e=\pm 1$.
The following Serre-Lusztig relations hold in the quantum group $\U$ (cf. \cite{Lus94}):
$
f_{i,j;n,m,e}^- =0,\text{ for } m \geq 1 -na_{ij}.
$
		
The Serre-Lusztig relations as formulated in \cite[Theorems~B, C, D]{CLW21} for quasi-split $\imath$quantum groups  (upon a substitution of $\tk_i$ by $\tk_i \rT$ in $\tUi$ or $\vs_i \rT$ in $\Ui$) remain valid for arbitrary $\imath$quantum groups; see Theorems~\ref{thm:nstd}, \ref{thm:recursion} and \ref{thm:f=0} below.  The same proofs {\em loc. cit.} (upon a substitution of a ``scalar" $\tk_i$ by another ``scalar" $\tk_i \rT$ in $\tUi$ as far as these relations are concerned), which are based on Serre-Lusztig relations of minimal degree (see Theorem~\ref{thm:min}), go through verbatim in the current setting. We shall not repeat the long proofs here. 
	
	As the identities \eqref{eq:SerreBn12}--\eqref{eq:SerreBn10} (i.e., Theorem~\ref{thm:min}) hold for $n\geq1$, so are the statements in Theorem~\ref{thm:nstd} and Theorem~\ref{thm:f=0} below. 
	
\begin{thm}  \label{thm:nstd}
For any $i \neq j\in \Iw$ such that $\btau i=i$,$u \in \Z_{\ge 0}$, and $\ov{t} \in\Z_2$, the following identities hold in $\tUi$ (or in $\Ui$),  for $n\geq1$:
\begin{align}
	\sum_{r+s=1-na_{ij}+2u} (-1)^r \B^{(r)}_{i,\ov{p}} B_j^n \B_{i,\ov{p}+\ov{na_{ij}}}^{(s)} &=0,
			\label{eq:Serre2t}
			\\
	\sum_{r+s=1-na_{ij}+2u} (-1)^r \B^{(r)}_{i,\ov{p}} B_{j,\ov{t}}^{(n)} \B_{i,\ov{p}+\ov{na_{ij}}}^{(s)} &=0.
			\label{eq:Serre1t}
\end{align}
\end{thm}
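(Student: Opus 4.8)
\emph{Strategy.} The plan is to derive both identities from the minimal-degree case already in hand (Theorem~\ref{thm:min}, i.e.\ the case $u=0$), by transporting the arguments of \cite{CLW21} for quasi-split $\imath$quantum groups verbatim under the substitution of the central scalar $\tk_i$ by $\tk_i\,\rT$. The structural fact that makes this legitimate is that $\tk_i\,\rT$ commutes with every $B_k$ ($k\in\Iw$): indeed $\tk_i=\tK_i\tK_i'$ is central in $\tU$, while $\rT=r_i(T_{\wb}E_i)$ commutes with each $B_k$ by \eqref{eq:rTB}. Hence, inside the subalgebra of $\tUi$ generated by $\{B_k\mid k\in\Iw\}$ together with $\tk_i\,\rT$ --- which is where every relation in Theorem~\ref{thm:nstd} lives --- the element $\tk_i\,\rT$ behaves exactly as the inert scalar $\tk_i$ did in the quasi-split setting of \cite{CLW21}.

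\emph{Reductions and induction.} First I would reduce to proving \eqref{eq:Serre2t} alone, since \eqref{eq:Serre1t} then follows by \cite[Proposition~3.2]{CLW21}, precisely as \eqref{eq:SerreBn10} was deduced from \eqref{eq:SerreBn12} in the proof of Theorem~\ref{thm:min}; and it suffices to argue in $\tUi$, the $\Ui$-statement following by the central reduction $\tk_i\mapsto\vs_i$. The base case $u=0$ of \eqref{eq:Serre2t} is exactly \eqref{eq:SerreBn12}. For the inductive step one passes from level $u-1$ to level $u$, raising the total degree by two, by repeatedly applying the recursive relation \eqref{lem:dividied power} for the $\imath$divided powers; the correction terms so produced each carry a factor $q_i\tk_i\,\rT$ and reorganize into relations of strictly smaller total degree, which vanish by the inductive hypothesis together with the base case. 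This is the mechanism underlying the proof of the corresponding statement \cite[Theorem~B]{CLW21}, and since that proof uses $\tk_i$ only through \eqref{lem:dividied power} and through commutations of $\tk_i$ past $B_i$ and $B_j$, it reads off line by line with $\tk_i$ replaced by $\tk_i\,\rT$.

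\emph{Main obstacle.} The step I expect to demand the most care is that, unlike $\tk_i$, the product $\tk_i\,\rT$ is \emph{not} central in $\tUi$: the factor $\rT\in\tU_{\Ib}^+$ commutes with the $B_k$ and with $F_j$, $T_{\wb}(E_{\tau j})\tK_j'$ for $k,j\in\Iw$ (by \eqref{eq:rTB}--\eqref{eq:rTFE}), but need not commute with arbitrary elements of $\tU_{\Ib}$. Thus the substitution of $\tk_i$ by $\tk_i\,\rT$ is safe only if every commutation performed in the \cite{CLW21} argument moves $\tk_i$ past an element of the commuting subalgebra generated by the $B_k$; I would check that this containment holds at each step. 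This is exactly the content of the assertion that the proofs go through ``as far as these relations are concerned.'' Once it is confirmed, the entire chain of identities of \cite[Theorems~B--D]{CLW21} transfers, yielding \eqref{eq:Serre2t} and hence \eqref{eq:Serre1t}.
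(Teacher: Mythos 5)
Your proposal matches the paper's own treatment: the paper likewise deduces Theorem~\ref{thm:nstd} by rerunning the proofs of \cite[Theorems~B, C, D]{CLW21} verbatim with the ``scalar'' $\tk_i$ replaced by $\tk_i\,\rT$, justified by the commutation relations \eqref{eq:rTB}--\eqref{eq:rTFE} and anchored on the minimal-degree case of Theorem~\ref{thm:min}. Your explicit identification of the one point needing care --- that $\tk_i\,\rT$ is only central relative to the subalgebra generated by the $B_k$, which is all the argument ever uses --- is precisely what the paper's phrase ``as far as these relations are concerned'' is meant to cover.
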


\begin{thm}  \label{thm:recursion}
		For $i\neq j\in \Iw$ such that  $\btau i=i$, $\ov{p}, \ov{t} \in\Z_2$, $n\ge 0$, and $e=\pm1$, the following identity holds in $\tUi$:
\begin{align}
	&q_i^{-e(2m+na_{ij})}  B_i\tf_{i,j;n,m,\ov{p},\ov{t},e}-\tf_{i,j;n,m,\ov{p},\ov{t},e}B_i\\ \notag
	&\quad = -[m+1]_{i} \tf_{i,j;n,m+1,\ov{p},\ov{t},e}
	+[m+na_{ij}-1]_{i} q_i^{1-e(2m+na_{ij}-1)} \tk_i\rT \tf_{i,j;n,m-1,\ov{p},\ov{t},e}.
\end{align}
\end{thm}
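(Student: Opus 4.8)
The plan is to reduce the entire computation to the quasi-split case treated in \cite{CLW21}, exploiting that $q_i \tk_i \rT$ behaves as a central scalar throughout. First I would record the relevant commutativity. Since $\btau i = i$, the element $\tk_i = \tK_i \tK_i'$ is central in $\tU$, while $\rT$ commutes with every $B_j$ (hence with each $\imath$divided power $\B^{(s)}_{i,\ov{p}}$ and with $B_{j,\ov{t}}^{(n)}$) by \eqref{eq:rTB} and \eqref{eq:rTFE}. Consequently $q_i \tk_i \rT$ commutes with all generators appearing in \eqref{eq:m-aodd}--\eqref{eq:m-aeven} and may be treated as a free commuting variable when manipulating $\tf_{i,j;n,m,\ov{p},\ov{t},e}$. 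This is exactly the role played by $q_i \tk_i$ in \cite{CLW21}, so every identity there transports to the present setting under the substitution $q_i \tk_i \rightsquigarrow q_i \tk_i \rT$.

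The computational engine is the recursion \eqref{lem:dividied power} for the $\imath$divided powers. A key simplification is that each $\B^{(s)}_{i,\ov{p}}$ is a polynomial in $B_i$ and the central element $q_i \tk_i \rT$, so $B_i$ commutes with every $\B^{(s)}_{i,\ov{p}}$; thus $\tf_{i,j;n,m,\ov{p},\ov{t},e} B_i$ is obtained by applying \eqref{lem:dividied power} to the rightmost factor $\B^{(s)}_{i,\ov{p}+\ov{na_{ij}}}$, just as $B_i \tf_{i,j;n,m,\ov{p},\ov{t},e}$ is obtained by applying it to the leftmost factor $\B^{(r)}_{i,\ov{p}}$. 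In each case the recursion bifurcates according to whether the parity $\ov{r}$ (respectively $\ov{s}$) matches $\ov{p}$ (respectively $\ov{p}+\ov{na_{ij}}$), producing a \emph{raising} term proportional to $[r+1]_i \B^{(r+1)}_i$ and, when the parities agree, a \emph{lowering} term carrying a factor $q_i \tk_i \rT$ and proportional to $[r]_i \B^{(r-1)}_i$. The middle factor $B_{j,\ov{t}}^{(n)}$ never commutes past $B_i$, which is precisely why $B_i\tf - \tf B_i$ does not vanish.

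Next I would collect the raising and lowering contributions, weighting the $B_i \tf$ side by $q_i^{-e(2m+na_{ij})}$. The raising terms shift $r+s+2u = m$ to $r+s+2u = m+1$ and should recombine into $-[m+1]_{i}\,\tf_{i,j;n,m+1,\ov{p},\ov{t},e}$, while the lowering terms (each contributing an extra $q_i \tk_i \rT$) shift to $r+s+2u = m-1$ and should recombine into $[m+na_{ij}-1]_{i}\, q_i^{1-e(2m+na_{ij}-1)}\, \tk_i\rT\,\tf_{i,j;n,m-1,\ov{p},\ov{t},e}$. The recombination rests on the Pascal-type recursions for the Gaussian binomials $\qbinom{\,\cdot\,}{u}_{q_i^2}$ together with elementary quantum-integer manipulations, and must be handled separately in the cases $m - na_{ij}$ odd and $m - na_{ij}$ even, since the shift $m \mapsto m \pm 1$ interchanges the two defining formulas \eqref{eq:m-aodd} and \eqref{eq:m-aeven} and hence the binomial upper indices.

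The main obstacle is precisely this bookkeeping: tracking the interplay of the parity bifurcation, the shifts $m \mapsto m \pm 1$, and the various powers of $q_i$ and the sign $e = \pm 1$. However, because $q_i \tk_i \rT$ commutes with everything exactly as $q_i \tk_i$ does in \cite{CLW21}, no genuinely new cancellation can occur, and the full $q$-binomial argument of that paper for the recursion applies verbatim after the substitution $q_i \tk_i \rightsquigarrow q_i \tk_i \rT$. I would therefore present the centrality reduction and the structural setup in detail, and invoke the verbatim correspondence with \cite{CLW21} for the final coefficient matching rather than reproduce the lengthy Gaussian-binomial identities.
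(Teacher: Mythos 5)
Your proposal matches the paper's own treatment: the paper proves Theorem~\ref{thm:recursion} precisely by observing that $q_i\tk_i\,\rT$ commutes with $B_i$, $B_j$ and all the $\imath$divided powers (so it acts as a ``scalar'' in these computations), and that the recursion-based proof of the corresponding theorem in \cite{CLW21} therefore goes through verbatim after replacing $q_i\tk_i$ by $q_i\tk_i\,\rT$. Your added detail on the raising/lowering bookkeeping via \eqref{lem:dividied power} is consistent with, and slightly more explicit than, the argument the paper actually records.
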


\begin{thm} [Serre-Lusztig relations]
		\label{thm:f=0}
Let $i\neq j\in \Iw$ such that  $\btau i=i$, $\ov{p}, \ov{t} \in\Z_2$, $n\ge 0$, and $e=\pm1$. Then, for $m<0$ and $m>-na_{ij}$, the following identities hold in $\tUi$,  for $n\geq1$:
\begin{align}
		\tf_{i,j;n,m,\ov{p},\ov{t},e}=0,\qquad \tf_{i,j;n,m,\ov{p},\ov{t},e}'=0.
\end{align}
\end{thm}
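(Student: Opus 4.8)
The plan is to transcribe the argument of \cite{CLW21} (where $\wb i=i$, hence $\rT=1$), the sole modification being the replacement of $\tk_i$ by $\tk_i\,\rT$; relations \eqref{eq:rTB}--\eqref{eq:rTFE} are precisely what make this legitimate, since $\tk_i$ is central (for $\btau i=i$) while $\rT\in\tU_{\Ib}^+\subset\tUi$ commutes with both $B_i$ and $B_j$, so that $\tk_i\rT$ commutes with both generators. Throughout, the indices $\ov p,\ov t,e$ are fixed. First I would dispose of the range $m<0$: here the conditions $r+s+2u=m$ with $r,s,u\ge0$ (recall $\B_{i,\ov p}^{(r)}=0$ for $r<0$) admit no solution, so every sum in \eqref{eq:m-aodd}--\eqref{eq:evev0} is empty and $\tf_{i,j;n,m,\ov p,\ov t,e}=\tf_{i,j;n,m,\ov p,\ov t,e}'=0$ trivially. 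The content therefore lies entirely in the range $m>-na_{ij}$, i.e.\ $m\ge 1-na_{ij}$ (using $-na_{ij}\ge0$).

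For the unprimed identity in this range I would run an upward induction on $m$ driven by the recursion of Theorem~\ref{thm:recursion}. The base case $m=1-na_{ij}$ is the minimal-degree relation: here $m-na_{ij}=1-2na_{ij}$ is odd, so \eqref{eq:m-aodd} applies, and since $m+na_{ij}=1$ the $q$-binomial $\qbinom{0}{u}_{q_i^2}=\delta_{u,0}$ kills all $u>0$ terms while both $q_i$-exponents collapse to $0$, leaving
\[
\tf_{i,j;n,1-na_{ij},\ov p,\ov t,e}=\sum_{r+s=1-na_{ij}}(-1)^r\B_{i,\ov p}^{(r)}B_{j,\ov t}^{(n)}\B_{i,\ov p+\ov{na_{ij}}}^{(s)}=0
\]
by \eqref{eq:SerreBn10} of Theorem~\ref{thm:min}. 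I would then feed $m=1-na_{ij}$ into Theorem~\ref{thm:recursion}: the coefficient $[m+na_{ij}-1]_i=[0]_i=0$ of the $\tf_{i,j;n,m-1}$ term vanishes precisely at this value, so the boundary term $\tf_{i,j;n,-na_{ij}}$ (which sits in the excluded range and is \emph{not} expected to vanish) drops out, and since $[2-na_{ij}]_i\ne0$ we get $\tf_{i,j;n,2-na_{ij}}=0$. With two consecutive vanishings secured, rewriting Theorem~\ref{thm:recursion} as
\begin{multline*}
[m+1]_i\,\tf_{i,j;n,m+1}=-q_i^{-e(2m+na_{ij})}B_i\tf_{i,j;n,m}+\tf_{i,j;n,m}B_i \\
+[m+na_{ij}-1]_i\,q_i^{1-e(2m+na_{ij}-1)}\,\tk_i\rT\,\tf_{i,j;n,m-1}
\end{multline*}
shows that $\tf_{i,j;n,m-1}=\tf_{i,j;n,m}=0$ forces $\tf_{i,j;n,m+1}=0$, as $[m+1]_i\ne0$ for all $m\ge1-na_{ij}$. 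Induction then yields $\tf_{i,j;n,m}=0$ for every $m\ge1-na_{ij}$. The primed identity $\tf_{i,j;n,m,\ov p,\ov t,e}'=0$ I would obtain by the entirely parallel induction for $\tf'$, using the analogue of Theorem~\ref{thm:recursion} with the roles of the two outer $\imath$divided powers exchanged, exactly as in \cite{CLW21}.

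The step I expect to be most delicate is not the induction but the verification that $\tk_i\,\rT$ truly behaves as a single commuting scalar at every stage---in particular in the derivation of the recursion of Theorem~\ref{thm:recursion} itself. This is where \eqref{eq:rTB}--\eqref{eq:rTFE} are essential, allowing $\tk_i\rT$ to be pulled past all Chevalley generators and collected; and Proposition~\ref{prop:universalitystrong} guarantees that the structure constants occurring in the relevant Serre--Lusztig expansions depend on $\tk_i\rT$ only as an abstract symbol, independent of the internal makeup of $\rT$. Granting this, the whole chain of identities of \cite{CLW21} transcribes verbatim, and the theorem follows.
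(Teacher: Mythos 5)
Your proposal is correct and matches the paper's approach: the paper likewise establishes Theorem~\ref{thm:f=0} by asserting that the proofs of \cite[Theorems~B, C, D]{CLW21} go through verbatim once the ``scalar'' $\tk_i$ is replaced by $\tk_i\,\rT$ (justified by \eqref{eq:rTB}--\eqref{eq:rTFE}), with the induction anchored at the minimal-degree relations of Theorem~\ref{thm:min} and driven by the recursion of Theorem~\ref{thm:recursion}. You have simply spelled out the induction that the paper delegates to \cite{CLW21}, and your base-case computation and handling of the vanishing coefficient $[m+na_{ij}-1]_i$ at $m=1-na_{ij}$ are exactly as in that reference.
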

	
\begin{rem}
Theorems~\ref{thm:recursion} and \ref{thm:f=0} hold  if we replace $B_{j,\ov{t}}^{(n)}$ by $B_j^n$ throughout the definitions of $\tf_{i,j;n,m,\ov{p},\ov{t},e}$ and $\tf_{i,j;n,m,\ov{p},\ov{t},e}'$ in \eqref{eq:m-aodd}--\eqref{eq:evev0}. Theorems~\ref{thm:recursion} and \ref{thm:f=0} remain valid over $\Ui =\Ui_\bvs$, once we replace $\tk_i$ by $\vs_i$ in the definition of $\tf_{i,j;n,m,\ov{p},\ov{t},e}$.
\end{rem}

	

\end{document}